\theoremstyle{definition}
\newtheorem{defi}{Definition}
\theoremstyle{plain}
\newtheorem*{theo}{Theorem}
\begin{document}
\title[Symplectic connections]{Symplectic Connections on Supermanifolds:  Existence and Non-Uniqueness}
\author{Paul A. Blaga}
\address{``Babe\c{s}-Bolyai'' University, \\ Faculty of Mathematics and Computer Sciences\\
1, Kog\u{a}lniceanu Street,\\
400084 Cluj-Napoca,\\
Romania}
\email{pablaga@cs.ubbcluj.ro}
\subjclass{58A50, 53D05}
\keywords{symplectic supermanifolds,symplectic connections}
\begin{abstract}
We show, in this note, that on any symplectic supermanifold, even or odd, there exist an infinite
dimensional affine space of symmetric connections, compatible to the symplectic form.
\end{abstract}
\dedicatory{To the memory of Professor Mircea-Eugen Craioveanu (1942-2012)}
\maketitle
\section{Introduction}
The connections compatible with a symplectic form have been studied for several decades, by now. They were introduced by Ph. Tondeur, in 1961 (see~\cite{tondeur}), for the more general situation of an almost-symplectic manifold.
Nevertheless, they became really important lately, in the early ninetieth, when Fedosov (\cite{fedosov}) discovered
that they may be useful in the deformation quantization. Therefore, a symplectic manifold endowed
with a symmetric connection, compatible with the symplectic form, has been baptized with the name
of \emph{Fedosov manifold}. A recent review of the theory of symplectic connections can be found in \cite{bieliavsky}. A few years later, the notion of symplectic connection has been
extended to symplectic supermanifolds and the corresponding objects (namely symplectic
supermanifolds, even or odd, endowed with a symplectic connection) have been named \emph{Fedosov
supermanifolds} (see \cite{geyer}). It is the aim of this note to show that, as in the case of symplectic manifolds,
on a symplectic supermanifold (odd or even, it doesn't matter), symplectic connections exist in
abundance. The language we use is slightly different from that used in the original papers, because
we use a coordinate-free approach (see \cite{bejancu90}, \cite{bejancu91}, \cite{blaga}).

As it is well-known, there are several approaches to supermanifolds, not entirely equivalent. The
differences are not very important for this paper. Nevertheless, to avoid ambiguities, we state from
the very beginning that for us ``supermanifold'' means ``supermanifold in the sense of Berezin and
Leites''\footnote{These supermanifolds are also called ``graded manifolds'', especially in the
Western literature.}. For details, see \cite{bartocci}, \cite{constantinescu}, \cite{kostant}, \cite{leites}.
\section{Symplectic connections on supermanifolds}
\begin{defi}
Let $\mathcal{M}$ be an arbitrary, finite dimensional, supermanifold. A
connection (a covariant derivative) on this supermanifold is a mapping
$\nabla:\mathcal{X}(\mathcal{M})\times \mathcal{X}(\mathcal{M})\to
\mathcal{X}(\mathcal{M})$ for which the following conditions are fulfilled:
\begin{enumerate}[(i)]
\item $\nabla$ is additive in both arguments:
\begin{equation*}
\nabla_{X_1+Y_2}{Y}=\nabla_{X_1}Y+\nabla_{X_2}Y,\quad
\nabla_X(Y_1+Y_2)=\nabla_XY_1+\nabla_XY_2;
\end{equation*}
\item $\nabla_{fX}Y=f\nabla_XY$;
\item $\nabla_X(fY)=X(f)\cdot Y+(-1)^{|X|\cdot |f|}\nabla_XY$,
\end{enumerate}
where in the first two relations $X,Y, X_1, X_2, Y_1, Y_2$ are arbitrary vector
fields and $f$ an arbitrary superfunction, while in the last equality all the
entries are assumed to be homogeneous.
\end{defi}
The torsion tensor can be defined here in a similar manner to the corresponding
tensor for connections on ordinary (ungraded) manifolds:
\begin{defi}
Let $\nabla$ be a connection on a supermanifold. The \emph{torsion} of the
connection is the tensor field (twice covariant and once contravariant) defined
by
\begin{equation*}
T(X,Y)=\nabla_XY-(-1)^{|X|\cdot |Y|}\nabla_YX-[X,Y],
\end{equation*}
for any homogeneous vector fields $X$ and $Y$. Also by analogy with the
classical case, a connection on a supermanifold is called \emph{symmetric} if
its torsion vanishes. Thus, the connection is symmetric iff for any homogeneous
vector fields $X$ and $Y$ we have
\begin{equation*}
\nabla_XY-(-1)^{|X|\cdot |Y|}\nabla_YX=[X,Y].
\end{equation*}
\end{defi}
It can be shown easily that, using the same methods from the classical
differential geometry, the covariant derivative on supermanifolds can be
extended to arbitrary tensor fields, not just vector fields. The interesting
case for us is the one of twice covariant tensor fields. Thus, if $g$ is a
twice covariant \emph{homogeneous} tensor field on a supermanifold
$\mathcal{M}$, then we have
\begin{equation*}
\begin{split}
(\nabla_Xg)(Y,Z)&\equiv \nabla_Xg(Y,Z)=X(g(Y,Z))-(-1)^{|X|\cdot
|g|}g(\nabla_XY,Z)-\\
&-(-1)^{|X|\cdot(|Y|+|g|)}g(Y,\nabla_XZ).
\end{split}
\end{equation*}
We are interested, in this paper, in the particular case of a
\emph{homogeneous} symplectic supermanifold, i.e. a supermanifold endowed with
a homogeneous 2-form $\omega$, which is both closed and non-degenerate.
\begin{defi}
Let $(\mathcal{M},\omega)$ be a homogeneous symplectic supermanifold
(hereafter, it will be called, simply, \emph{symplectic supermanifold}). A
connection $\nabla$ on $\mathcal{M}$ is called \emph{symplectic} it is both
symmetric and compatible to the symplectic form. Thus, a symplectic connection
on a symplectic supermanifold is a connection $\nabla$ for which:
\begin{enumerate}[(i)]
\item the torsion tensor vanishes, i.e.
\begin{equation*}
\nabla_XY-(-1)^{|X|\cdot |Y|}\nabla_YX=[X,Y]
\end{equation*}
and
\item it is compatible to the symplectic form, i.e.
\begin{equation*}
\begin{split}
 \nabla_X\omega(Y,Z)&=X(\omega(Y,Z))-(-1)^{|X|\cdot
|\omega|}\omega(\nabla_XY,Z)-\\
&-
(-1)^{|X|\cdot(|Y|+|\omega|)}\omega(Y,\nabla_XZ)=0,
\end{split}
\end{equation*}
\end{enumerate}
for any homogeneous vector fields $X,Y,Z$.
\end{defi}
\section{Existence and uniqueness results for symplectic connections}
\begin{theo}[Existence]
Let $(\mathcal{M},\omega)$ be a symplectic supermanifold. Then on $\mathcal{M}$
there is at least a  symplectic connection.
\end{theo}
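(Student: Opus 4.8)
The plan is to imitate the classical Tondeur construction, carrying the Koszul sign rule through each step. The guiding observation is that the difference of two connections is tensorial: if $\nabla$ and $\nabla^0$ both satisfy (i)--(iii) of the first definition, then $A(X,Y):=\nabla_XY-\nabla^0_XY$ is bilinear over the superfunctions, $A(fX,Y)=fA(X,Y)$ and $A(X,fY)=(-1)^{|X||f|}fA(X,Y)$, since the inhomogeneous Leibniz terms $X(f)\,Y$ cancel. Producing a symplectic connection therefore amounts to fixing a convenient reference connection and adding to it a suitable tensor $A$; I would take the reference to be symmetric.

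First I would establish that a symmetric connection $\nabla^0$ exists at all. Locally, in a coordinate chart, the connection annihilating all coordinate fields satisfies (i)--(iii); since a graded manifold in the Berezin--Leites sense admits partitions of unity (lifted from the underlying reduced manifold), a convex combination $\sum_\alpha\rho_\alpha\nabla^\alpha$ of local connections with $\sum_\alpha\rho_\alpha=1$ is again a global connection, the non-tensorial Leibniz terms assembling correctly precisely because the $\rho_\alpha$ sum to one. Given any such connection, its torsion $T$ is a tensor satisfying the graded antisymmetry $T(X,Y)=-(-1)^{|X||Y|}T(Y,X)$, and a direct check shows that
\[
\hat\nabla_XY:=\nabla_XY-\tfrac12 T(X,Y)
\]
is torsion-free; I would then rename $\hat\nabla$ as $\nabla^0$.

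Next I would measure the failure of compatibility of $\nabla^0$ by the three-argument tensor
\[
N(X,Y,Z):=(\nabla^0_X\omega)(Y,Z),
\]
which is tensorial because $\nabla^0\omega$ is. Two structural identities drive the construction. First, the graded antisymmetry of $\omega$ is inherited by $N$ in its last two slots, $N(X,Y,Z)=-(-1)^{|Y||Z|}N(X,Z,Y)$. Second, since $\nabla^0$ is torsion-free and $d\omega=0$, the exterior-derivative formula collapses to a graded-cyclic identity, the super analogue of $N(X,Y,Z)+N(Y,Z,X)+N(Z,X,Y)=0$. Writing $S(X,Y,Z):=\omega(A(X,Y),Z)$, non-degeneracy of $\omega$ makes $A\leftrightarrow S$ a bijection, torsion-freeness of $\nabla^0+A$ becomes the graded symmetry $S(X,Y,Z)=(-1)^{|X||Y|}S(Y,X,Z)$, and compatibility $\nabla\omega=0$ becomes the linear relation
\[
N(X,Y,Z)=(-1)^{|X||\omega|}\omega(A(X,Y),Z)+(-1)^{|X|(|Y|+|\omega|)}\omega(Y,A(X,Z)).
\]
Moving the second $A$ into the first slot of $\omega$, this is a symmetrization of $S$ over its last two arguments, and one is led to set (the Koszul signs being fixed by $|\omega|$)
\[
S(X,Y,Z)=\tfrac13\Big[N(X,Y,Z)+(-1)^{|X||Y|}N(Y,X,Z)\Big],
\]
which is manifestly graded-symmetric in $X,Y$, so the resulting $\nabla:=\nabla^0+A$ inherits the vanishing torsion of $\nabla^0$.

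It then remains to verify that this $\nabla$ is compatible with $\omega$; substituting the formula for $S$ into the compatibility relation and applying the antisymmetry of $N$ in its last two slots together with the graded-cyclic identity, the right-hand side collapses to $N(X,Y,Z)$, exactly as the factor $\tfrac13$ conspires with the cyclic identity in the ungraded Tondeur computation. I expect the main obstacle to be not the idea but the sign bookkeeping: extracting the graded-cyclic identity from $d\omega=0$ in the torsion-free case, and tracking every Koszul factor through this final collapse. One must also confirm at the outset that $\omega$ being homogeneous of definite parity is what makes the transfer into the first slot and the non-degeneracy inversion $S\mapsto A$ well defined; the argument is insensitive to whether $|\omega|$ is $0$ or $1$, which is why the conclusion holds for both even and odd symplectic supermanifolds.
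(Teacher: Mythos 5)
Your overall strategy is exactly the paper's: start from a symmetric connection $\nabla^0$, measure its incompatibility with $\omega$ by $\nabla^0\omega$, and correct $\nabla^0$ by one third of a graded symmetrization of that defect, as in Tondeur's classical argument. Your construction of $\nabla^0$ (local flat connections glued by a partition of unity, then killing the torsion by subtracting $\tfrac12 T$) differs from the paper's, which invokes the Levi-Civita connection of a Riemannian metric; your route is sound and in fact more self-contained, since it avoids any existence question for graded metrics. Working with the covariant tensors $N(X,Y,Z)$ and $S(X,Y,Z)$ instead of the paper's $(2,1)$-tensors is immaterial, because $\omega$ is nondegenerate.

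The gap is in precisely the signs you deferred, and it is not cosmetic: as written, your formulas prove the theorem only for even $\omega$, while the statement is meant to cover odd symplectic supermanifolds as well. With your covariant $N(X,Y,Z)=(\nabla^0_X\omega)(Y,Z)$, the identity that actually follows from $d\omega=0$ and torsion-freeness of $\nabla^0$ is not the naive graded cyclic identity but the $|\omega|$-twisted one
\begin{equation*}
(-1)^{|\omega||X|}N(X,Y,Z)+(-1)^{|\omega||Y|+|X|(|Y|+|Z|)}N(Y,Z,X)+(-1)^{|\omega||Z|+|Z|(|X|+|Y|)}N(Z,X,Y)=0,
\end{equation*}
and correspondingly the correction must be
\begin{equation*}
S(X,Y,Z)=\tfrac13\left[(-1)^{|\omega||X|}N(X,Y,Z)+(-1)^{|X||Y|+|\omega||Y|}N(Y,X,Z)\right],
\end{equation*}
which is still graded symmetric in $(X,Y)$, so torsion-freeness survives. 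Your untwisted formula coincides with this only when $|\omega|=0$. For $|\omega|=1$ it genuinely fails: take $X$ odd and $Y,Z$ even; then the antisymmetry of $N$ in its last two slots together with the correct cyclic identity collapse the compatibility defect of your corrected connection to $\nabla_X\omega(Y,Z)=\tfrac43\,\nabla^0_X\omega(Y,Z)$ rather than $0$, so your $\nabla$ is compatible only where $\nabla^0$ already was. This is exactly why the paper defines its defect tensor as a $(2,1)$-tensor through $\nabla^0_X\omega(Y,Z)=(-1)^{|\omega||X|}\omega(N(X,Y),Z)$: the factor $(-1)^{|\omega||X|}$ is absorbed into the definition, after which both the cyclic identity and the $\tfrac13$-symmetrization take the classical Koszul-signed form, and only then does the argument become insensitive to the parity of $\omega$. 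So the plan is right, but your closing claim of uniformity in $|\omega|$ is false for the formulas as stated; the twist must be inserted (in the definition of $N$, as the paper does, or in the definition of $S$, as above) for the proof to go through in the odd case.
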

\begin{proof}
The proof we are going to give is an adaptation of the proof from the classical
symplectic geometry of manifolds. Namely, we notice, first of all, that on $M$
there is at least a symmetric connection, $\nabla^0$. To proof this, it is
enough to consider a Riemannian metric on $\mathcal{M}$ (which we know we can
find) and take $\nabla^0$ to be the Levi-Civita connection associated to this
metric, which, we also know, exists (and it is even unique). Of course,
$\nabla^0$ is not a symplectic connection, in most situations, and what we
shall do is to ``correct'' this connection to get a symplectic one.

We define now a twice covariant and once contravariant tensor field $N$ through
the relation
\begin{equation}\label{proof0}
\nabla^0_X\omega(Y,Z)=(-1)^{|\omega|\cdot |X|}\omega(N(X,Y),Z).
\end{equation}
We shall proof some properties of $N$, for later use. First, we claim that
\begin{equation}\label{proof1}
\omega(N(X,Y),Z)=-(-1)^{|Y|\cdot |Z|}\omega (N(X,Z),Y).
\end{equation}
Indeed, we have
\begin{equation*}
\begin{split}
&\omega(N(X,Y),Z)=(-1)^{|\omega|\cdot
|X|}\nabla_X^0\omega(Y,Z)=\\
&=-(-1)^{|\omega|\cdot |X|}(-1)^{|Y|\cdot
|Z|}\nabla^0_X(Z,Y)=-(-1)^{|Y|\cdot |Z|}\omega (N(X,Z),Y).
\end{split}
\end{equation*}
Another important property of $N$, which follows, this time, from the closeness
of the symplectic form, is the following:
\begin{equation}\label{proof3}
\begin{split}
&\omega(N(X,Y),Z)+(-1)^{|X|(|Y|+|Z|)}\omega(N(Y,Z),X)+\\
&+(-1)^{|Z|(|X|+|Y|)}\omega(N(Z,X),Y)=0�
\end{split}
\end{equation}
As mentioned before, to prove~(\ref{proof3}), we shall start from the closeness
of the symplectic form and we shall use the symmetry of the connection
$\nabla^0$, as well as the definition of the tensor $N$. Thus, we have
\begin{equation*}
\begin{split}
0&=d\omega(X,Y,Z)=(-1)^{|\omega|\cdot
|X|}X(\omega(Y,Z))-\\
&-(-1)^{|Y|(|\omega|+|X|)}Y(\omega(X,Z))+(-1)^{|Z|(|\omega|+|X|+|Y|)}Z(\omega(X,Y))-\\
&-\omega([X,Y],Z)+(-1)^{|Y|\cdot
|Z|}\omega([X,Z],Y)-(-1)^{|X|(|Y|+|Z|)}\omega([Y,Z],X)=\\
&=
(-1)^{|\omega|\cdot
|X|}X(\omega(Y,Z))-(-1)^{|Y|(|\omega|+|X|)}Y(\omega(X,Z))+\\
&+(-1)^{|Z|(|\omega|+|X|+|Y|)}Z(\omega(X,Y))-\omega\left(\nabla^0_XY-(-1)^{|X|\cdot
|Y|}\nabla^0_YX,Z\right)+\\
&+(-1)^{|Y|\cdot |Z|}\omega\left(\nabla^0_XZ-(-1)^{|X|\cdot
|Z|}\nabla_Z^0X,Y\right)-\\
&- (-1)^{|X|(|Y|+|Z|)}\omega\left(\nabla^0_YZ-(-1)^{|Y|\cdot
|Z|}\nabla^0_ZY,X\right)=\\
&=
(-1)^{|\omega|\cdot |X|}X(\omega(Y,Z))-(-1)^{|Y|(|\omega|+|X|)}Y(\omega(X,Z))+\\
&+
(-1)^{|Z|(|\omega|+|X|+|Y|)}Z(\omega(X,Y))-
\omega\left(\nabla^0_XY,Z\right)+\\
&+
(-1)^{|X|\cdot
|Y|}\omega\left(\nabla^0_YX,Z\right)+ (-1)^{|Y|\cdot|Z|}\omega\left(\nabla^0_XZ,Y\right)
-\\
&-
(-1)^{(|X|+|Y|)|Z|}\omega\left(\nabla_Z^0X,Y\right)-(-1)^{|X|(|Y|+|Z|)}\omega\left(\nabla^0_YZ,X)\right)+\\
&
+(-1)^{|X|(|Y|+|Z|)+|Y|\cdot|Z|}
\omega\left(\nabla^0_ZY,X\right)=(-1)^{|\omega|\cdot
|X|}\left[X(\omega(Y,Z))-\right.\\
&\left.-
(-1)^{|\omega|\cdot|X|}\omega\left(\nabla^0_XY,Z\right)-
(-1)^{|X|(|\omega|+|Y|}\omega\left(Y,\nabla^0_XZ\right)\right]-\\
&-(-1)^{|Y|(|\omega|+|X|}\Big[Y(\omega(X,Z))-(-1)^{|\omega|\cdot
|Y|}\omega\left(\nabla^0_YX,Z\right)-\\
&-
(-1)^{|Y|(|\omega|+|X|}\omega\left(X,\nabla^0_YZ\right)\Big]+
(-1)^{|Z|(|\omega|+|X|+|Y|}\Big[Z(\omega(X,Y))-\\
&-(-1)^{|\omega|\cdot|Z|}\omega\left(\nabla^0_ZX,Y\right)-
(-1)^{|Z|(|\omega|+|X|}
\omega\left(X,\nabla^0_ZY\right)\Big]=\\
&=
(-1)^{|\omega|\cdot|X|}\nabla^0_X
\omega(Y,Z)-(-1)^{|Y|(|\omega|+|X|}\nabla^0_Y\omega(X,Z)+\\
&+(-1)^{|Z|(|\omega|+|X|+|Y|)}\nabla^0_Z\omega(X,Y)
\end{split}
\end{equation*}
We define now a new connection, $\nabla$, by letting
\begin{equation}\label{sympl1}
 \nabla_XY=\nabla^0_XY+\frac{1}{3}N(X,Y)+\frac{(-1)^{|X|\cdot|Y|}}{3}N(Y,X).
\end{equation}
We start by proving that this is, indeed, a connection. $\nabla$
is, obviously,  bi-additive and homogeneous in the first variable.
Moreover, we have
\begin{equation*}
\begin{split}
 \nabla_X(fY)&=\nabla^0_X(fY)+\frac{1}{3}N(X,fY)+\frac{(-1)^{|X|\cdot|Y|}}{3}N(fY,X)=\\
&=f\nabla^0_XY+(-1)^{|f|\cdot|X|}X(f)\cdot Y+f\bigg(\frac{1}{3}N(X,Y)+\\
&+
\frac{(-1)^{|X|\cdot|Y|}}{3}N(Y,X)\bigg)=f\nabla_XY+(-1)^{|f|\cdot|X|}X(f)\cdot Y,
\end{split}
\end{equation*}
hence $\nabla$ is a connection.

We claim that $\nabla$ is a symplectic connection. Let's check first that $\nabla$ is symmetric. Indeed, we have
\begin{equation*}
 \begin{split}
  \nabla_XY&-(-1)^{|X|\cdot|Y|}\nabla_YX=\nabla^0_XY+\frac{1}{3}N(X,Y)+\frac{(-1)^{|X|\cdot|Y|}}{3}N(Y,X)-\\
&-(-1)^{|X|\cdot|Y|}\left(\nabla^0_YX+\frac{1}{3}N(Y,X)+\frac{(-1)^{|Y|\cdot|X|}}{3}N(X,Y)\right)=\\
&=\nabla^0_XY-(-1)^{|X|\cdot|Y|}\nabla^0_YX=[X,Y],
 \end{split}
\end{equation*}
where we used the fact that the connection $\nabla^0$ is
symmetric. Finally, we show that  the connection is compatible
with the symplectic form. We have
\begin{equation*}
\begin{split}
& \nabla_X\omega(Y,Z)=X(\omega(Y,Z))-(-1)^{|\omega|\cdot|X|}\omega\left(\nabla_XY,Z\right)-\\
&-(-1)^{|X|(|\omega|+|Y|}\omega\left(Y,\nabla_XZ\right)=X(\omega(Y,Z))-(-1)^{|\omega|\cdot|X|}\omega\Big(\nabla^0_XY+\\
&+\frac{1}{3}N(X,Y)+
 \frac{(-1)^{|X|\cdot|Y|}}{3}N(Y,X),Z\Big)-(-1)^{|X|(|\omega|+|Y|}\omega\Big(Y,\nabla^0_XZ+\\
 &+
 \frac{1}{3}N(X,Z)+\frac{(-1)^{|X|\cdot|Z|}}{3}N(Z,X)\Big)=X(\omega(Y,Z))-\\
 &-(-1)^{|\omega|\cdot|X|}\omega\left(\nabla^0_XY,Z\right)-(-1)^{|X|(|\omega|+|Y|)}
\omega\left(Y,\nabla^0_XZ\right)-\\
&-\frac{1}{3}(-1)^{|\omega|\cdot|X|}\omega(N(X,Y),Z)-\frac{1}{3}(-1)^{|X|(|\omega|+|Y|)}\omega(N(Y,X),Z)-\\
&-\frac{1}{3}(-1)^{|X|(|\omega|+|Y|)}\omega(Y,N(X,Z))-\frac{1}{3}(-1)^{|X|(|\omega|+|Y|+Z|)}\omega(Y,N(Z,X))=\\
&=\nabla^0_X\omega(Y,Z)-\frac{1}{3}(-1)^{|\omega|\cdot|X|}\omega(N(X,Y),Z)+\\
&+
\frac{1}{3}(-1)^{|X|(|\omega|+|Y|+|Z|)}\omega(N(Y,Z),X)-\frac{1}{3}(-1)^{|\omega|\cdot|X|}\omega(N(X,Y),Z)+\\
&+\frac{1}{3}(-1)^{|\omega|\cdot|X|+|Z|(|X|+|Y|}\omega(N(Z,X),Y)=
(-1)^{|\omega|\cdot|X|}\omega(N(X,Y),Z)-\\
&-
\frac{2}{3}(-1)^{|\omega|\cdot|X|}\omega(N(X,Y),Z)+\\
&+
\frac{1}{3}(-1)^{|\omega|\cdot|X|}\Big((-1)^{|X|(|Y|+|Z|)}\omega(N(Y,Z),X)+\\
&+
(-1)^{|Z|(|X|+|Y|)}\omega(N(Z,X),Y)\Big)=\frac{1}{3}(-1)^{|\omega|\cdot|X|}\Big(\omega(N(X,Y),Z)+\\
&+(-1)^{|X|(|Y|+|Z|)}\omega(N(Y,Z),X)+(-1)^{|Z|(|X|+|Y|)}\omega(N(Z,X),Y) \Big)=0,
 \end{split}
\end{equation*}
which proves that, indeed, $\nabla$ is a symplectic connection.

Thus, on any symplectic supermanifold there is \emph{at least} a symplectic connection.
 As we shall prove next, there are, actually, infinitely many.

We notice, first of all, that the difference of two symplectic connections is allways a
symplectic connection. Let now $\nabla$ be a symplectic connection. Any other connection on
$\mathcal{M}$ should be of the form
\begin{equation*}
 \nabla'_XY=\nabla_XY+ S(X,Y),
\end{equation*}
where $S$ is a $(2,1)$ tensor field on $\mathcal{M}$. If we want $\nabla'$ to be symplectic,
first of all it should be symmetric, which means:
\begin{equation*}
\nabla'_XY-(-1)^{|X|\cdot|Y|}\nabla'_YX = [X,Y],
\end{equation*}
i.e.
\begin{equation*}
 \nabla_XY+S(X,Y)-(-1)^{|X|\cdot|Y|}\nabla_YX-(-1)^{|X|\cdot|Y|}S(Y,X)=[X,Y].
\end{equation*}
As $\nabla$ is symmetric, it follows that $S$ should verify the relation
\begin{equation*}
 S(X,Y)=(-1)^{|X|\cdot|Y|}S(Y,X),
\end{equation*}
meaning that $S$ is supersymmetric. Now we should ask that
$\nabla'$ should, also, be compatible to the symplectic form. We
have:
\begin{equation*}
\begin{split}
&\nabla'_X\omega(Y,Z)=X\left(\omega(Y,Z)\right)-(-1)^{|\omega|\cdot
|X|}\omega\left(\nabla'_XY,Z\right)-\\
&-(-1)^{|X|(|\omega|+|Y|}\omega\left(Y,\nabla'_XZ\right)=\\
&=\underbrace{X\left(\omega(Y,Z)\right)-(-1)^{|\omega|\cdot
|X|}\omega\left(\nabla_XY,Z\right)-(-1)^{|X|(|\omega|+|Y|}\omega\left(Y,\nabla_XZ\right)}_{=0}-\\
&-(-1)^{|\omega|\cdot
|X|}\omega\left(S(X,Y),Z\right)-(-1)^{|X|(|\omega|+|Y|}\omega\left(Y,S(X,Z)\right)=\\
&=(-1)^{|\omega|\cdot |X|}\left[\omega\left(S(X,Y),Z\right)+(-1)^{|X|\cdot
|Y|}\omega\left(Y,S(X,Z)\right)\right]=\\
&=(-1)^{|\omega|\cdot |X|}\left[\omega\left(S(X,Y),Z\right)-(-1)^{|Y|\cdot
|Z|}\omega\left(S(X,Z),Y\right)\right].
\end{split}
\end{equation*}
Thus, $\nabla'$ is a symplectic connection if and only if
\begin{equation*}
\omega\left(S(X,Y),Z\right)=(-1)^{|Y|\cdot |Z|}\omega\left(S(X,Z),Y\right),
\end{equation*}
i.e. the 3-covariant tensor field $\omega\left(S(X,Y),Z\right)$ is \emph{totally  graded
symmetric}. The conclusion is, as in the classical, ungraded, case, that the set of all symplectic
connections on a given symplectic supermanifold is an infinite dimensional affine space.
\end{proof}


\begin{thebibliography}{99}
\bibitem{bartocci}Bartocci, C., Bruzzo, U., Hern\'andez-Ruip\'erez, D., \emph{The Geometry of
Supermanifolds}, Kluwer, 1991
\bibitem{bejancu90}Bejancu, A.,  \emph{A new viewpoint on differential
geometry of supermanifolds (I)}, Universitatea din Timi¸soara, Facultatea de Matematic¢a, Seminarul
de Mecanic¢a, 29 (1990)
\bibitem{bejancu91} Bejancu, A., \emph{A new viewpoint on differential
geometry of supermanifolds (II)}, Universitatea din Timi¸soara, Facultatea de Matematic¢a, Seminarul
de Mecanic¢a, 30 (1991)
\bibitem{blaga} Blaga, P.A., \emph{Riemannian connections on supermanifolds: a coordinate-free
approach}, Mathematica, Tome \textbf{47}(70), No 1, 2005, pp. 27–34
\bibitem{bieliavsky}Bieliavsky, P., Cahen, M.,  Gutt, S.,  Rawnsley, J.,  Schwachhofer, L.,
\emph{Symplectic connections}, math/0511194
\bibitem{constantinescu}Constantinescu, F., de Groote, H.F.,  \emph{Geometrische und algebraische
Methoden der Physik, Supermannigfaltigkeiten und Virasoro-Algebren}, Teubner, 1994
\bibitem{fedosov}Fedosov, B.V., \emph{A simple geometrical construction of deformation quantization}, J.
Diff.Geom., \textbf{40} (1994), 213-238
\bibitem{gelfand}Gelfand, I., Retakh, V., and Shubin, M., \emph{Fedosov manifolds}, Advan. Math. \textbf{136}(1998), 104-140
\bibitem{geyer}Geyer, B., Lavrov, P.M.,  \emph{Basic properties of Fedosov supermanifolds}, TSPU Vestnik, \textbf{44N7}(2004), 62-68
\bibitem{kostant} Kostant, B.,  \emph{Graded manifolds, graded Lie theory and prequantization}, Lecture
Notes in Math. \textbf{570} (1977), 177-306
\bibitem{leites} Leites, D.A., \emph{Introduction to the theory of supermanifolds}, Russian Mathematical
Surveys, \textbf{35} (1980), 1-64
\bibitem{tondeur}Tondeur, Ph.,
\emph{Affine Zusammenh\"ange auf Mannigfaltigkeiten mit fast-symplektischer
Struktur}, Comment. Helv. Math., \textbf{36} (1961), 234-244
\end{thebibliography}
\end{document}